	\theoremstyle{plain}
	\newtheorem{thm}{Theorem}
	\numberwithin{equation}{section}
	\providecommand{\SetS}[1]{ \left\{ #1 \right\} }
	\providecommand{\LIP}[2]{ \left \langle #1 , #2 \right \rangle}
	\providecommand{\WholeSpace}{\mathbb{R}^4}
	\providecommand{\HyperboloidUpper}{H_T^{+}}
	\providecommand{\HyperboloidOne}{H_S}
	\providecommand{\LightCone}{L^{+}}
	\providecommand{\Ball}{B^{3}}
	\providecommand{\Disc}{B^{2}}
	\providecommand{\Vector}[1]{\boldsymbol{#1}}
	\providecommand{\HalfSpace}[1]{R_{#1}}
	\providecommand{\Plane}[1]{P_{#1}}
	\providecommand{\Orthoscheme}[1]{R_{r , \theta} (#1)}
	\providecommand{\VolumeFunction}{V_{r , \theta}}
	\providecommand{\Projection}{\mathcal{P}}
	\providecommand{\Vertex}[1]{P_{#1}}
	\providecommand{\Edge}[2]{P_{#1} P_{#2}}
	\providecommand{\Face}[3]{P_{#1} P_{#2} P_{#3}}
	\providecommand{\VertexLift}[1]{\Vector{p}_{#1}}
	\providecommand{\FaceNormal}[1]{\Vector{u}_{#1}}
	\providecommand{\EdgeLength}[2]{\ell_{#1 , #2}}
	\providecommand{\DihedralAngle}[2]{\theta_{#1 , #2}}
	\DeclareMathOperator{\Arcsinh}{arcsinh}
	\DeclareMathOperator{\Arccosh}{arccosh}
	\providecommand{\RATT}[2]{R_{#1} (#2)}
	\providecommand{\RAT}[1]{\RATT{r}{#1}}
	\providecommand{\AreaFunction}[1]{A_{r} (#1)}
	\providecommand{\AngleAtZero}[1]{a (#1)}
\begin{document}

\title[On the maximal volume of hyperbolic complete orthoschemes]{On the maximal volume of three-dimensional hyperbolic complete orthoschemes}

\author{Kazuhiro Ichihara}

\address{Department of Mathematics, College of Humanities and Sciences, Nihon University, 3-25-40 Sakurajosui, Setagaya-ku, Tokyo 156-8550, Japan}

\email{ichihara@math.chs.nihon-u.ac.jp}

\author{Akira Ushijima}

\address{Faculty of Mathematics and Physics, Institute of Science and Engineering, Kanazawa University, Kanazawa 920--1192, Japan}

\email{ushijima@se.kanazawa-u.ac.jp}

\thanks{The first author is partially supported by JSPS KAKENHI Grant Number 23740061 and Joint Research Grant of Institute of Natural Sciences at Nihon University 2013.
The second author is partially supported by JSPS KAKENHI Grant Number 24540071.}

\date{\today}

\begin{abstract}
	A three-dimensional orthoscheme
	is defined as a tetrahedron
	whose base is a right-angled triangle
	and an edge joining the apex
	and a non-right-angled vertex
	is perpendicular to the base.
	A generalization, called complete orthoschemes,
	of orthoschemes is known
	in hyperbolic geometry.
	Roughly speaking,
	complete orthoschemes consist
	of three kinds of polyhedra;
	either compact, ideal or truncated.
	We consider a particular
	family of hyperbolic complete orthoschemes,
	which share the same base.
	They are parametrized
	by the ``height",
	which represents
	how far the apex is
	from the base.
	We prove
	that the volume attains maximal
	when the apex is ultraideal
	in the sense of hyperbolic geometry,
	and that
	such a complete orthoscheme
	is unique in the family.
\end{abstract}
\maketitle

\section{Introduction} \label{sec_Intro}

In \cite{ke},
Kellerhals wrote
``the most basic objects in polyhedral geometry
are orthoschemes",
and she gave a formula
to calculate the volumes of
complete orthoschemes
in the three-dimensional hyperbolic space.
What we discuss here
is
the existence and the uniqueness
of the maximal volume
of a family of complete orthoschemes
parametrized by the ``height".

Consider a family of pyramids in Euclidean space
with a fixed base polygon and the locus of apexes perpendicular to the base polygon.
The volumes of pyramids
strictly increases
when the height increases,
because pyramids strictly increases as a set.
By the same reason,
this phenomenon holds true for
such a family of pyramids in hyperbolic space.
In contrast to the Euclidean case,
the volume approaches to a finite value.
Furthermore,
in hyperbolic space the apex can ``run out" the space.
Then we can still obtain finite volume hyperbolic polyhedron
by {\em truncation}\/ with respect to the apex.
The volume converges to zero
as the vertex goes away from the space.
So it is an interesting question when the volume becomes maximum.

As is mentioned above,
one of the most fundamental one among all such pyramids  is
the orthoscheme.
An orthoscheme
is a kind of simplex
which has particular orthogonality
among its faces.
Let 
$\Vertex{0}$,
$\Vertex{1}$,
$\Vertex{2}$ and
$\Vertex{3}$
be the vertices of a simplex
$R$
in the three-dimensional hyperbolic space.
We denote by $\Edge{i}{j}$
the edge spanned by $\Vertex{i}$ and $\Vertex{j}$,
and by $\Face{i}{j}{k}$
the face spanned by $\Vertex{i}$, $\Vertex{j}$ and $\Vertex{k}$.
Such a simplex $R$
is called an  {\em orthoscheme}\/
(in the ordinary sense)
if 
the edge $\Edge{0}{1}$
is perpendicular to
the face $\Face{1}{2}{3}$
and the face $\Face{0}{1}{2}$
is orthogonal to $\Edge{2}{3}$.
In other words,
an orthoscheme
is a tetrahedron with
a right-angled triangle $\Face{0}{1}{2}$
as its base
and an edge joining the apex
and a non-right-angled vertex, say $\Vertex{2}$,
is perpendicular to the base.
Vertices $\Vertex{0}$ and $\Vertex{3}$
are called the {\em principal vertices}\/ of $R$.
Its precise definition
will be given in Section~\ref{sec: orthoschemes}.

Though orthoschemes are also considered
in Euclidean or
spherical spaces, 
in hyperbolic space
the ordinary orthoschemes
are extended to the so-called
{\em complete orthoschemes}.
Let $\Ball$
be the open unit ball in the three-dimensional
Euclidean space $\mathbb{R}^3$
centered at the origin.
The set $\Ball$
can be regarded as the so-called
{\em projective ball model}\/
of the three-dimensional hyperbolic space.
Any tetrahedron in hyperbolic space
appears as a Euclidean tetrahedron in $\Ball$.
If
one or both principal vertices 
of an orthoscheme $R$ lie in
the boundary of $\Ball$,
the set $R \cap \Ball$
is called an {\em ideal polyhedron},
which is not bounded
in hyperbolic space,
while its volume is finite.
Take one step further
and
we allow
principal vertices
to be in the exterior of $\Ball$.
The volume of $R \cap \Ball$
is no longer finite,
but there is a canonical way
to delete ends of 
$R \cap \Ball$ with infinite volume
so that
we obtain a polyhedron of finite volume,
called a {\em truncated polyhedron}.
Complete orthoschemes
are, roughly speaking,
either compact, ideal 
or truncated orthoschemes.
The precise definitions
of complete orthoschemes and
truncation
will also be given in Section~\ref{sec: orthoschemes}.

What we study in this paper
is
the maximal volume
of a
family of complete orthoschemes
with one parameter.
Consider a
family of complete orthoschemes
that share
the same base
$\Face{0}{1}{2}$.
We allow the vertex $\Vertex{0}$
to be in the exterior of $\Ball$.
In this case the base
$\Face{0}{1}{2}$
means the truncated polygon
obtained from
the triangle
with vertices $\Vertex{0}$,
$\Vertex{1}$ and $\Vertex{2}$.
Such a
family
of complete orthoschemes
is
parametrized by
the hyperbolic length
of the edge $\Edge{2}{3}$
when $\Vertex{3}$ is in $\Ball$.
When the hyperbolic length
increases,
the orthoscheme strictly
increases as a set,
which means the volume
also increases with respect to
the function of the hyperbolic length.
This phenomenon
holds until
the vertex $\Vertex{3}$
lies in
the boundary $\partial \Ball$ of $\Ball$.
The hyperbolic length of $\Edge{2}{3}$
is ``beyond" the infinity
when $\Vertex{3}$
is in the exterior of $\Ball$,
but we have a complete
orthoscheme
with finite volume
by
truncation.
Instead of the hyperbolic length,
using
the Euclidean length
of $\Edge{2}{3}$,
which we mentioned as ``height" 
in the first paragraph,
we
can parametrize the family
even if $\Vertex{3}$ is in the complement of $\Ball$.
The complete orthoscheme
approaches the empty set
when
$\Vertex{3}$
goes far away from 
$\Ball$.
The family thus has
maximal
volume
complete orthoschemes,
which arise when $\Vertex{3}$
lies
in the complement of $\Ball$.

As a toy model,
let us consider the
same
phenomenon
for
the two-dimensional orthoschemes, namely 
hyperbolic triangle $\Face{0}{1}{2}$
with right angle at $\Vertex{1}$.
Take a family of
complete orthoschemes
parametrized by the ``height" of
$\Edge{1}{2}$.
The
area
strictly increases
when $\Vertex{2}$
approaches to the boundary of $\Disc$,
the projective disc model of the two-dimensional hyperbolic space.
The area attains maximal when $\Vertex{2}$ lies in $\partial \Disc$.
When $\Vertex{2}$ is in the exterior of $\Disc$,
the area decreases, but not necessarily monotonically.
These facts are summarized as Theorem~\ref{thm: 2ortho}
in the appendix.

One may expect that
the same phenomenon happens
for three-dimensional complete orthoschemes.
Is the volume attains maximal
at least
when $\Vertex{3}$ is in $\partial \Ball$?
Does the volume decrease when $\Vertex{3}$
goes far away from $\Ball$?
Our main result,
which is Theorem~\ref{thm: main} in 
Section~\ref{sec: MainResult},
answers both of the questions negatively.

\section{Preliminaries of hyperbolic geometry} \label{sec: preliminaries}
There are several models to
introduce hyperbolic geometry.
Among them
we use
the {\em hyperboloid model}\/
to calculate lengths and angles with respect to the hyperbolic metric,
and use the {\em projective ball model}\/
to define complete orthoschemes.
Definitions of these two models,
together with formulae to calculate
hyperbolic lengths and hyperbolic angles,
are explained in this section.
See \cite{ra}
for basic references on hyperbolic geometry.

As a set,
the hyperboloid model
$\HyperboloidUpper$
of the
three-dimensional
hyperbolic space
is defined
as a subset of the four-dimensional
Euclidean space $\WholeSpace$
by
\begin{equation*}
	\HyperboloidUpper \vcentcolon
	= \Set{ \Vector{x} = (x_0 , x_1 , x_2 , x_3) \in \WholeSpace | \LIP{\Vector{x}}{\Vector{x}} = -1 \text{ and } x_0 > 0 } ,
\end{equation*}
where $\LIP{\cdot}{\cdot}$, called the{\em Lorentzian inner product}, is
defined as
\begin{equation*}
	\LIP{\Vector{x}}{\Vector{y}} \vcentcolon = - x_0 \, y_0 + x_1 \, y_1 + x_2 \, y_2 + x_3 \, y_3 + x_4 \, y_ 4
\end{equation*}
for any $\Vector{x} = (x_0 , x_1 , x_2 , x_3)$ and $\Vector{y} = (y_0 , y_1 , y_2 , y_3)$ in $\WholeSpace$.
The restriction of the quadratic form induced from the Lorentzian inner product
to the tangent spaces of $\HyperboloidUpper$
is positive definite
and gives a Riemannian metric on $\HyperboloidUpper$,
which is constant curvature of
$-1$.
The set $\HyperboloidUpper$ together with this metric
gives
the {\em hyperboloid model}
of the three-dimensional hyperbolic space.

Associated with
$\HyperboloidUpper$,
there are two important subsets of $\WholeSpace$:
\begin{align*}
	\HyperboloidOne & \vcentcolon = \Set{ \Vector{x} \in \WholeSpace | \LIP{\Vector{x}}{\Vector{x}} = 1 } ,&
	\LightCone & \vcentcolon= \Set{ \Vector{x} \in \WholeSpace | \LIP{\Vector{x}}{\Vector{x}} = 0 \text{ and } x_0 > 0 } .
\end{align*}
Every point $\Vector{u}$ in $\HyperboloidOne$
corresponds to a half-space
\begin{equation*}
	\HalfSpace{\Vector{u}} \vcentcolon =  \Set{ \Vector{x} \in \WholeSpace | \LIP{\Vector{x}}{\Vector{u}} \leq 0 } ,
\end{equation*}
bounded by a plane
\begin{equation*}
	\Plane{\Vector{u}} \vcentcolon =  \Set{ \Vector{x} \in \WholeSpace | \LIP{\Vector{x}}{\Vector{u}} = 0 } .
\end{equation*}
The intersection $\Plane{\Vector{u}} \cap \HyperboloidUpper$
is a geodesic plane with respect to the hyperbolic metric.
If
$\Vector{u}$ is taken from $\LightCone$,
the set $\HalfSpace{\Vector{u}}$
is defined as
\begin{equation*}
	\HalfSpace{\Vector{u}} \vcentcolon =  \Set{ \Vector{x} \in \WholeSpace | \LIP{\Vector{x}}{\Vector{u}} \leq - \frac{1}{2} } .
\end{equation*}
The intersection
$\HalfSpace{\Vector{u}} \cap \HyperboloidUpper$ is called a {\em horoball}.
The intersection of the boundary
\begin{equation*}
	\Plane{\Vector{u}} \vcentcolon =  \Set{ \Vector{x} \in \WholeSpace | \LIP{\Vector{x}}{\Vector{u}} = - \frac{1}{2} } 
\end{equation*}
of $\HalfSpace{\Vector{u}}$
and $\HyperboloidUpper$
is called a {\em horosphere}.

The Lorentzian inner product
is also used to calculate
distances and angles with respect to the hyperbolic metric.
The details of the following results
are explained
in {\S}3.2 of \cite{ra}.
Let $\Vector{u}$ be a point in $\HyperboloidUpper$
and let $\Vector{v}$ be taken from $\HyperboloidOne$ with $\Vector{u} \in \HalfSpace{\Vector{v}}$,
then the hyperbolic distance $\ell$
between $\Vector{u}$ and the geodesic plane
$\Plane{\Vector{v}}$ is
calculated by 
\begin{equation} \label{eq: DistancePointPlane}
	\sinh \ell  = - \LIP{\Vector{u}}{\Vector{v}} .
\end{equation}
Suppose that $\Vector{u}$ is in $\LightCone$
and $\Vector{v}$ is in $\HyperboloidOne$ with $\Vector{u} \in \HalfSpace{\Vector{v}}$.
Let $\ell$
be the signed hyperbolic distance
between
the horosphere $\Plane{\Vector{u}} \cap \HyperboloidUpper$
and the geodesic plane $\Plane{\Vector{v}}  \cap \HyperboloidUpper$.
The sign is defined to be positive
if
the horosphere and the geodesic plane
do not intersect,
otherwise negative.
Then 
the signed distance
$\ell$
is calculated by
\begin{equation} \label{eq: DistancePointHorosphere}
	\frac{e^{\ell}}{2}  = - \LIP{\Vector{u}}{\Vector{v}} .
\end{equation}
If
both $\Vector{u}$ and $\Vector{v}$
are taken from $\HyperboloidOne$
with $\Vector{u} \in \HalfSpace{\Vector{v}}$ and $\Vector{v} \in \HalfSpace{\Vector{u}}$,
then there are three
possibilities:
$\HalfSpace{\Vector{u}} \cap \HalfSpace{\Vector{v}}$
intersects $\HyperboloidUpper$,
intersects $\LightCone$
or does not intersect both $\HyperboloidUpper$ and $\LightCone$.
The
first case
means that
the geodesic planes
$\Plane{\Vector{u}} \cap \HyperboloidUpper$ and
$\Plane{\Vector{v}} \cap \HyperboloidUpper$
intersect and form
a corner $\HalfSpace{\Vector{u}} \cap \HalfSpace{\Vector{v}} \cap \HyperboloidUpper$.
The hyperbolic dihedral angle $\theta$
between 
these geodesic planes measured in this corner
is calculated by 
\begin{equation} \label{eq: DihedralAngle}
	\cos \theta = - \LIP{\Vector{u}}{\Vector{v}} .
\end{equation}
The third case
means that
the geodesic planes
$\Plane{\Vector{u}} \cap \HyperboloidUpper$ and
$\Plane{\Vector{v}} \cap \HyperboloidUpper$
are
{\em ultraparallel},
meaning that
they
do not intersect
in $\HyperboloidUpper$
and
there
is
a
unique geodesic line in $\HyperboloidUpper$
which is perpendicular to these geodesic planes.
The hyperbolic length
$\ell$
of the segment between
these planes
is calculated by 
\begin{equation} \label{eq: DistancePlanes}
	\cosh \ell = - \LIP{\Vector{u}}{\Vector{v}} .
\end{equation}
The second case
is regarded as
the first case with hyperbolic dihedral angle $0$
or the third case with the hyperbolic distance $0$.
Geodesic planes in this case
are called {\em parallel}\/ in hyperbolic space.

The {\em projective ball model} $\Ball$
is another model of 
the three-dimensional hyperbolic space,
which is induced
from $\HyperboloidUpper$.
Let
$\Projection$
be the radial projection from
$\WholeSpace - \Set{\Vector{x} \in \WholeSpace | x_0 = 0}$
to the affine hyperplane
$\boldsymbol{P}_1 \vcentcolon = \Set{\Vector{x} \in \WholeSpace | x_0 = 1}$
along the ray from the origin
$\Vector{o}$ of $\WholeSpace$.
The projection $\Projection$
is a homeomorphism on
$\HyperboloidUpper$
to the three-dimensional open unit ball
$\Ball$ in
$\boldsymbol{P}_1$
centered at
$(1,0,0,0)$.
A metric
is
induced
on $\Ball$ 
from $\HyperboloidUpper$
by the projection.
With this metric
$\Ball$
is called the {\em projective ball model}\/
of the three-dimensional hyperbolic space.
The projection $\Projection$
also induces the mapping
from $\WholeSpace - \SetS{\Vector{o}}$
to the three-dimensional real projective space
$\mathbb{R}P^3$,
which is defined to be the union
of $\boldsymbol{P}_1$
and
the set of lines in the affine hyperplane
$\Set{\Vector{x} \in \WholeSpace | x_0 = 0}$
through $\Vector{o}$.
In contrast to ordinary points
in $\Ball$,
points in the set $\partial \Ball$
of the boundary of $\Ball$
are called {\em ideal},
and points in the exterior of $\Ball$
are called {\em ultraideal}.
We often regard $\Ball$
as the unit open ball centered at the origin
in $\mathbb{R}^3$.

We mention important properties of $\Ball$
to be used in the definition of complete orthoschemes
in the next section.
First,
every geodesic plane in $\Ball$
is given as the intersection of
a Euclidean plane and $\Ball$.
This is because
every geodesic plane in $\HyperboloidUpper$
is defined as the intersection of $\HyperboloidUpper$
and a linear subspace of $\WholeSpace$ of dimension three,
and a geodesic plane in $\Ball$
is the image of that in $\HyperboloidUpper$
by the radial projection.
The projection $\Projection$
thus gives a correspondence
between points in 
the exterior of $\Ball$ in $\mathbb{R}P^3$
and the geodesic planes of $\Ball$.
We call
$\Projection (\Vector{u})$
for $\Vector{u} \in \HyperboloidOne$
the {\em pole}\/
of the plane $\Projection (\Plane{\Vector{u}})$
or the geodesic plane $\Projection (\Plane{\Vector{u}} \cap \HyperboloidUpper)$.
Conversely,
we call $\Projection (\Plane{\Vector{u}})$
the {\em polar plane}\/ of
$\Projection (\Vector{u})$,
and we call
$\Projection (\Plane{\Vector{u}} \cap \HyperboloidUpper)$
the {\em polar geodesic planes}\/ of $\Projection (\Vector{u})$.
If $\Projection (\Plane{\Vector{u}} \cap \HyperboloidUpper)$
does not pass through the origin of $\Ball$,
then 
its pole is given as the apex
of a circular cone
which is tangent to $\partial \Ball$
and has the base circle
$\Plane{\Vector{u}} \cap \partial \Ball$.
The second important property
is that,
for a given geodesic plane, say $P$, in $\Ball$,
every plane or line which passes through the pole
of $P$ is orthogonal to $P$ in $\Ball$.
This is proved
by using Equation~\eqref{eq: DihedralAngle}.

\section{Complete orthoschemes} \label{sec: orthoschemes}
Following \cite{ke}
we introduce complete orthoschemes.
As is mentioned in the introduction,
an (ordinary) {\em orthoscheme}\/
in the three-dimensional hyperbolic space
is a tetrahedron
with vertices $\Vertex{0}$, $\Vertex{1}$, $\Vertex{2}$ and $\Vertex{3}$
which satisfies that
$\Edge{0}{1}$
is perpendicular to $\Face{1}{2}{3}$
and that $\Face{0}{1}{2}$
is orthogonal to $\Edge{2}{3}$.
The vertices $\Vertex{0}$ and $\Vertex{3}$ are
called {\em principal vertices}.

{\em Complete orthoschemes}\/
are a generalization of ordinary
orthoschemes
by allowing one or both principal vertices
to be ideal or ultraideal.
Take $\Ball$ as our favorite model
of the hyperbolic space
in what follows.
As a set,
any orthoscheme in the ordinary sense
are given
as a Euclidean tetrahedron in $\Ball$.
When
one or both principal vertices
are ideal,
the tetrahedron
as a set in the hyperbolic space
is no more bounded,
but still has finite volume.
We allow to call such tetrahedra
ordinary orthoschemes.

Further generalization of orthoschemes
is explained via
{\em truncation}\/
of ultraideal
vertices.
Suppose
a vertex $v$ of a tetrahedron $R$
is ultraideal.
Let $T$
be the half-space
bounded by the polar plane of $v$
with $v \not \in T$.
{\em Truncation}\/
of $R$
with respect to
$v$
is defined
as an operation
to obtain a polyhedron $R \cap T$.
If $v$ is
close enough 
to
$\partial \Ball$,
then
$R \cap T$ is non-empty.

Truncation
is also explained
by using
the hyperboloid model.
The inverse image of $v$ for $\Projection$
on $\HyperboloidOne$
consists of two points.
Each of them gives a half-space in $\WholeSpace$,
and one of them corresponds to
the inverse image of $T$.
In this sense
there is a one-to-one
correspondence
between half-spaces in $\Ball$
and points in $\HyperboloidOne$.
The
point
in $\HyperboloidOne$
corresponding
to $v$ with respect to $T$
in the sense above
is called the {\em proper}\/ inverse image of $v$
for truncation of $R$.
This correspondence
will be used 
to calculate hyperbolic lengths of edges
and hyperbolic dihedral angles between faces
of complete orthoschemes.

When
one of the principal vertices,
say $\Vertex{3}$,
is ultraideal
and $\Vertex{0}$ is not (i.e, ordinal or ideal),
we have 
a polyhedron
with finite volume
by
truncation with respect to $\Vertex{3}$.
Such a polyhedron is called
a {\em simple frustum}\/ with ultraideal vertex $\Vertex{3}$.
We remark that
the vertices $\Vertex{0}$, $\Vertex{1}$ and
$\Vertex{2}$
are
simultaneously deleted by
truncation
when
$\Vertex{3}$ is far away from $\Ball$,
since
both the polar geodesic plane of $\Vertex{3}$
and the triangle $\Face{0}{1}{2}$
are orthogonal to $\Edge{2}{3}$
in $\Ball$.

Suppose
both $\Vertex{0}$ and $\Vertex{3}$
are
ultraideal.
There
are
three possibilities:
the polar planes of $\Vertex{0}$ and $\Vertex{3}$
intersect in $\Ball$, they are parallel,
or they are ultraparallel.
In
the first case,
the polyhedron 
we obtain by truncation
is
well known as a {\em Lambert cube}.
See \cite[Figure~2]{ke} for example.
The edge $\Edge{0}{3}$
is deleted by truncation.
In the third case, on the other hand,
the
polyhedron
obtained
by
truncation
still
has
the edge
induced from $\Edge{0}{3}$.
We call this polyhedron
a {\em double frustum}.
The second case is the limiting situation of
both the first and the third cases.
We call polyhedra obtained
in the second case
{\em double frustum with an ideal vertex}.

As a summary,
combinatorial types of complete orthoschemes are either
\begin{itemize}
	\item
		ordinary orthoschemes,
		whose principal vertices are either ordinarily points
		or ideal points,
	\item
		simple frustums,
	\item
		double frustums possibly with an ideal vertex, or
	\item
		Lambert cubes.
\end{itemize}

\section{The Schl\"afli differential formula} \label{sec: Schlafli}

Kellerhals
obtained formulae
to calculate volumes of complete orthoschemes
in \cite{ke};
the formula
for Lambert cubes
is given
in Theorem~III,
and the formula for other kinds of complete orthoschemes
is given in Theorem~II.
In both formulae,
they are parametrized
by
the
three
non-right hyperbolic dihedral angles.
Under the same setting used in Section~\ref{sec: orthoschemes},
we denote by $\DihedralAngle{i}{j}$
the hyperbolic dihedral angle
between
faces opposite to $\Vertex{i}$ and $\Vertex{j}$.
When
a complete orthoscheme
is a Lambert cube,
the
geodesic planes
containing faces opposite to 
$\Vertex{1}$ and $\Vertex{2}$
are
ultraparallel.
In this case
$\DihedralAngle{1}{2}$
is defined to be the hyperbolic dihedral angle
between
the polar geodesic planes of
the vertex $\Vertex{0}$
and $\Vertex{3}$.
In this sense
the formulae
are parametrized by
$\DihedralAngle{0}{1}$,
$\DihedralAngle{1}{2}$
and $\DihedralAngle{2}{3}$.

Kellerhals used the {\em Schl\"afli differential formula}\/
to obtain these volume formulae.
The volume formulae
are not used directly
in our arguments;
what we will use
is the fact that the formulae
are parametrized
by
the
three
non-right
hyperbolic dihedral
angles.
On the other hand,
the Schl\"afli differential formula itself
plays
an important role in our arguments.

The Schl\"afli differential formula
gives an expression
of the differential form
of the volume function
with respect to the
hyperbolic lengths
of edges
and hyperbolic dihedral angles between faces.
As is given in Theorem~I in \cite{ke},
the differential form $d V$
of the volume function $V$ of 
any complete orthoschemes
is expressed as
\begin{equation*}
	d V
	= - \frac{1}{2} \left( \EdgeLength{0}{1} \, d \DihedralAngle{0}{1}
	+ \EdgeLength{1}{2} \, d \DihedralAngle{1}{2}
	+ \EdgeLength{2}{3} \, d \DihedralAngle{2}{3} \right) ,
\end{equation*}
where
$\EdgeLength{i}{j}$
is the hyperbolic length of the edge $\Edge{i}{j}$
if
both
$\Vertex{i}$ and $\Vertex{j}$ are points in $\Ball$,
$\EdgeLength{i}{j}$
is the hyperbolic distance
between $\Vertex{i}$
and the polar geodesic plane of $\Vertex{j}$
if
$\Vertex{i}$ is a point in $\Ball$
and $\Vertex{j}$ lies in the exterior of $\Ball$,
and
$\EdgeLength{i}{j}$
is the hyperbolic distance
between
the polar geodesic planes of
$\Vertex{i}$
and 
$\Vertex{j}$
if
both
$\Vertex{i}$ and $\Vertex{j}$ lie in the exterior of $\Ball$.
If a complete orthoscheme
is a Lambert cube,
then 
$\EdgeLength{0}{3}$
is taken as
the hyperbolic length
of the edge
obtained as the intersection
of the polar geodesic planes
of $\Vertex{0}$ and $\Vertex{3}$.
If 
one of
$\Vertex{0}$ and $\Vertex{3}$ is ideal,
then the edges with the ideal vertex as an endpoint
have infinite hyperbolic lengths.
In this case
we
take
any horosphere centered at
the ideal vertex,
and 
each infinite length
is replaced by
the signed
hyperbolic
distance
between the other endpoint and
the horosphere.
As is mentioned in the concluding remarks
in \cite{mi},
the Schl\"afli differential
formula
is
still
valid by this treatment.

As a result,
the Schl\"afli differential formula is applicable
to any kind of complete orthoschemes.
We use the formula
as the equation
\begin{equation} \label{eq: SchlafliFormula}
	\frac{\partial V}{\partial \DihedralAngle{i}{j}} = - \frac{1}{2} \, \EdgeLength{i}{j}
\end{equation}
for $(i , j) = (0 , 1) , (1 , 2) , (2 , 3)$.
This equation plays
a key role in the proof of Theorem~\ref{thm: main}.

\section{Main result} \label{sec: MainResult}
Suppose $\Ball$
lies in the $xyz$-coordinate space
of $\mathbb{R}^3$.
By the action of an isometry,
any ordinary orthoscheme
can be put as
the vertex $\Vertex{0}$
is in the positive quadrant of the $xy$-plane,
the vertex $\Vertex{1}$
is on the positive part of the $y$-axis,
the vertex $\Vertex{2}$ is the origin, and
the vertex $\Vertex{3}$ is
on the positive part of the $z$-axis.
Such orthoschemes
are parametrized by $(h , r , \theta)$,
where
$h$ is the 
$z$-coordinate of $\Vertex{3}$,
i.e.,
the Euclidean distance
between $\Vertex{2}$ and $\Vertex{3}$,
$r$ is the Euclidean distance
between $\Vertex{0}$ and $\Vertex{2}$,
and $\theta$
is the Euclidean angle between
edges $\Edge{0}{2}$ and $\Edge{1}{2}$.

When we regard such an orthoscheme
as a tetrahedron with base $\Face{0}{1}{2}$,
the $z$-coordinate $h$ of $\Vertex{3}$
is the ``height" of the tetrahedron.
What we study in this paper
is a family of complete orthoschemes
parametrized by the ``height".
For fixed $r$ and $\theta$,
we have a one-parameter family
$\SetS{ \Orthoscheme{h} }_{0 < h \leq 1}$
of ordinary orthoschemes parametrized by $h$.
This family is extended even when $h \geq 1$
and/or $r \geq 1$ with $r \cos \theta < 1$,
if we mean
$\Orthoscheme{h}$
a
complete orthoscheme.

Let $\VolumeFunction (h)$ be
the hyperbolic volume of
$\Orthoscheme{h}$.
By the volume formulae,
the function
$\VolumeFunction$
is continuous on $[0, + \infty)$
and piecewise differentiable
on the intervals
each of which
corresponds
to a combinatorial type of complete orthoschemes
given at the end of Section~\ref{sec: orthoschemes}.
When
$h$ increases in value approaching $1$,
the orthoscheme also increases
as a set.
So $\VolumeFunction (h)$ strictly increases in value
approaching $\VolumeFunction (1)$ as $h$ approaches $1$ from below.
When
$h$ approaches positive infinity $+\infty$,
the sequence $\Orthoscheme{h}$
of complete orthoschemes
converges to the base $\Face{0}{1}{2}$;
the complete orthoschemes are always ordinary ones
when
$0 < r \leq 1$,
and
the complete orthoschemes
changes into Lambert cubes from double frustums
when
$r > 1$.
In any case $\VolumeFunction (h)$ converges to $0$ as
$h$ approaches $+\infty$.

Based on these observations,
we have set the following questions.
For
a given one-parameter
family $\SetS{ \Orthoscheme{h} }_{h > 0}$
of complete orthoschemes,
does the function $\VolumeFunction$ attain maximal
when $\Vertex{3}$ is in $\partial \Ball$?
Is
$\VolumeFunction$ strictly decreasing on $(1 , +\infty)$?
The
next
theorem,
which is the main result of this paper,
answers both of the questions negatively.
%
%
%
%
%
%
\begin{thm} \label{thm: main}
	For any $r >0$ and $0 < \theta < \pi / 2$
	with $r \cos \theta < 1$,
	the volume $\VolumeFunction (h)$
	of $\Orthoscheme{h}$
	attains maximal for some $h \in (1 , + \infty)$.
	Furthermore,
	the maximal volume
	is unique
	for any $r$ and $\theta$,
	and it is given 
	before $\Orthoscheme{h}$ becomes
	a Lambert cube.
\end{thm}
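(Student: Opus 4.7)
The plan is to analyze $\VolumeFunction(h)$ by exploiting the Schl\"afli differential formula~\eqref{eq: SchlafliFormula} after reducing the number of varying quantities. A first key observation is that throughout the family the edge $\Edge{2}{3}$ lies on the positive $z$-axis, which passes through the origin of the projective ball model. The rotation by $\theta$ about the $z$-axis is simultaneously a Euclidean and a hyperbolic isometry of $\Ball$ that sends the face $\Face{1}{2}{3}$ to $\Face{0}{2}{3}$; hence the hyperbolic dihedral angle $\DihedralAngle{0}{1}$ equals the Euclidean angle $\theta$ and is constant along the family. Consequently~\eqref{eq: SchlafliFormula} collapses to
$$
\VolumeFunction'(h) = -\frac{1}{2}\left( \EdgeLength{1}{2}\,\frac{d\DihedralAngle{1}{2}}{dh} + \EdgeLength{2}{3}\,\frac{d\DihedralAngle{2}{3}}{dh} \right),
$$
so only two dihedral angles and two edge lengths need to be tracked as $h$ varies.

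Next I would obtain closed-form expressions for $\DihedralAngle{1}{2}(h)$, $\DihedralAngle{2}{3}(h)$, $\EdgeLength{1}{2}(h)$ and $\EdgeLength{2}{3}(h)$ by lifting the relevant vertices (or, in the ultraideal cases, their proper inverse images on $\HyperboloidOne$) to $\WholeSpace$ and applying the Lorentzian formulae~\eqref{eq: DistancePointPlane}--\eqref{eq: DistancePlanes}. The procedure is uniform across the three combinatorial regimes --- simple frustum, double frustum possibly with an ideal vertex, and Lambert cube --- provided one systematically replaces ``length to an ultraideal vertex'' by the hyperbolic distance to its polar geodesic plane, and ``dihedral angle at a truncated edge'' by the angle between the relevant polar planes; continuity across the regime transitions is guaranteed by the validity of the Schl\"afli formula for complete orthoschemes as recorded in Section~\ref{sec: Schlafli}. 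Existence of a maximum is then immediate: $\VolumeFunction$ is positive on $(0, +\infty)$, strictly increasing on $(0, 1]$ by the set-theoretic monotonicity argument discussed in Section~\ref{sec: MainResult}, and tends to $0$ as $h \to +\infty$.

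The core task is therefore uniqueness together with the Lambert cube exclusion. Let $h_L \in (1 , +\infty]$ be the value at which $\Orthoscheme{h}$ first becomes a Lambert cube (so $h_L = +\infty$ precisely when $r \le 1$, in which case the exclusion is vacuous). My plan is first to verify two one-sided sign conditions --- namely $\VolumeFunction'(1^+) > 0$, so the maximum is not realized when $\Vertex{3}$ is ideal, and, when $h_L < +\infty$, $\VolumeFunction'(h_L^-) < 0$, so the critical point lies in $(1 , h_L)$ --- both of which should follow by direct substitution of the explicit formulae obtained above into the reduced Schl\"afli identity. Given these, uniqueness amounts to showing that $\VolumeFunction'$ admits a single sign change on $(1 , h_L)$, which I would attempt by reparametrizing $h$ by $\DihedralAngle{2}{3}$ (which I expect to vary monotonically with $h$) and proving that the resulting volume function is strictly concave. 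The main obstacle is precisely this concavity: it requires one more differentiation of the already intricate closed-form expressions, together with a delicate sign analysis that must remain consistent across the transition between the simple- and double-frustum regimes. Once concavity is established, uniqueness and location of the critical point follow from the boundary sign conditions, completing the proof.
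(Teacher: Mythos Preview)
Your overall strategy---reduce Schl\"afli via $\DihedralAngle{0}{1}=\theta$ constant and analyze $\VolumeFunction'(h)$ from explicit Lorentzian computations---is exactly the paper's. One labeling slip: the coefficients in your reduced formula should be $\EdgeLength{0}{3}$ and $\EdgeLength{0}{1}$ (the lengths of the edges \emph{along which} $\DihedralAngle{1}{2}$ and $\DihedralAngle{2}{3}$ sit), not $\EdgeLength{1}{2}$ and $\EdgeLength{2}{3}$. This matters, because $\EdgeLength{0}{1}$ lies in the fixed base and is constant in $h$; the paper exploits this heavily.

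The genuine gap is in the Lambert-cube exclusion. Verifying $\VolumeFunction'(h_L^{-})<0$ would only guarantee that \emph{some} critical point lies in $(1,h_L)$; it says nothing about $(h_L,+\infty)$, so establishing a single sign change on $(1,h_L)$ alone cannot give global uniqueness or rule out a maximum in the Lambert-cube range. (That one-sided limit is moreover a $0\cdot\infty$ indeterminate: $\EdgeLength{0}{3}\to 0$ while $d\DihedralAngle{1}{2}/dh$ blows up as $h\to h_L^{-}$, so ``direct substitution'' will not settle it.) The paper handles this differently and more simply: on the whole Lambert-cube interval it checks that $\EdgeLength{0}{3}>0$, $\EdgeLength{0}{1}>0$, $d\DihedralAngle{1}{2}/dh>0$, and $d\DihedralAngle{2}{3}/dh>0$, so~\eqref{eq: dvdh} gives $\VolumeFunction'(h)<0$ for all $h>h_L$, eliminating that range outright.

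For uniqueness on the frustum range the paper does not attempt concavity. It factors $\VolumeFunction'(h)$ as a nonvanishing quantity times $F(h)-\tfrac12\log\lvert r^2-1\rvert$ for an explicit elementary $F$, and shows this factor has at most one zero: for $r\le 1$ the function $F$ is strictly monotone; for $r>1$ one observes that $F'$ has at most one zero (it is a nonzero factor times a downward quadratic $G$ with $G(1)>0$) and that $F(h)-\tfrac12\log(r^2-1)\to 0$ as $h\to h_L$, so two zeros of $F-\text{const}$ would force two zeros of $F'$ by Rolle, a contradiction. Your concavity-after-reparametrization plan may be viable, but you would still need the separate Lambert-cube argument above, and the concavity itself across the regime transition is precisely the step you flag as unresolved.
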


The outline of the proof is as
follows.
Using
the Schl\"afli differential formula,
we can calculate
$d \VolumeFunction (h) / d h$
for each combinatorial types of  $\Orthoscheme{h}$.
Since $\VolumeFunction$
is a strictly increasing function on $[0,1]$,
proving
$\lim_{h \downarrow 1} d \VolumeFunction (h) / d h > 0$
tells us that
the function $\VolumeFunction$
attains maximal for some
$h \in (1 , + \infty)$.
The
uniqueness
of such $h$
is
induced
from the uniqueness
of the solution 
of the equation
$d \VolumeFunction (h) / d h = 0$
on $(1 , +\infty)$.

\section{Proof of the main result}
Our proof of Theorem~\ref{thm: main}
is organized as
follows.
After
confirming
the correspondence between combinatorial types of complete orthoschemes
and conditions of parameters $h$, $r$ and $\theta$,
we first obtain
suitable inverse images of
vertices
of $\Orthoscheme{h}$
for $\Projection$.
These are used
to calculate hyperbolic lengths and hyperbolic dihedral angles
appearing in the Schl\"afli differential formula.
Under each of
conditions of parameters,
we prove that
the volume function $\VolumeFunction$
with respect to $h$
attains maximal on $(1 , + \infty)$,
and that
such $h$ is unique.
For $r > 1$,
we also prove that
$\VolumeFunction$
does not attain maximal
if $\Orthoscheme{h}$ is a Lambert cube.

\subsection{Proper inverse images of the vertices}
By the definition of $\Orthoscheme{h}$,
the coordinates of the vertices
are
\begin{align*}
	\Vertex{0} &= (r \sin \theta , r \cos \theta , 0) ,&
	\Vertex{1} &= (0 , r \cos \theta , 0) ,\\
	\Vertex{2} &= (0 , 0 , 0) ,&
	\Vertex{3} &= (0 , 0 , h) ,
\end{align*}
where  $0 < \theta < \pi/2$.
As is mentioned after Theorem~\ref{thm: main},
it is enough to assume that $h > 1$ in what follows.
A complete orthoscheme $\Orthoscheme{h}$ is
a simple frustum if $0 < r < 1$, and
a simple frustum with ideal vertex $\Vertex{0}$ if $r = 1$.
When $r > 1$,
we always assume
$r \cos \theta < 1$
so that $\Vertex{1}$ is in $\Ball$.
Under these assumptions,
a complete orthoscheme $\Orthoscheme {h}$ with $r>1$
is
either a double frustum,
a double frustum with an ideal vertex, or
a Lambert cube.
These are distinguished via
the Euclidean distance between
the origin of $\mathbb{R}^3$
and the edge $\Edge{0}{3}$;
$\Orthoscheme {h}$ is a double frustum,
a double frustum with an ideal vertex, or a Lambert cube
if and only if the Euclidean distance is less than,
equal to, or greater than $1$ respectively.
Since the Euclidean distance
is $h \, r / \sqrt{r^2 + h^2}$,
we have that
these are equivalent to
$h < r / \sqrt{r^2 - 1}$,
$h = r / \sqrt{r^2 - 1}$, or
$h > r / \sqrt{r^2 - 1}$ respectively.
The inequality  
$h \, r / \sqrt{r^2 + h^2} < 1$
is also equivalent to 
$\left( 1 - r^2 \right) h^2 + r^2 > 0$
without the assumption that $r > 1$.
We note that
this inequality always holds for any $h > 0$ and $0 < r \leq 1$.

As a summary,
complete orthoschemes $\Orthoscheme{h}$
are parametrized by $(h,r,\theta)$, and
with
$h > 1$ and $0 < \theta < \pi/2$, and
\begin{itemize}
	\item
		when $0 < r \leq 1$,
		complete orthoschemes $\Orthoscheme{h}$ are simple frustums
		with $\left( 1 - r^2 \right) h^2 + r^2 > 0$,
	\item
		when $r > 1$ with
		$r \cos \theta < 1$ and $h \leq r / \sqrt{r^2 - 1}$,
		complete orthoschemes
		$\Orthoscheme{h}$ are double frustums (possibly with an ideal vertex), and
	\item
		when $r > 1$ with
		$r \cos \theta < 1$ and $h > r / \sqrt{r^2 - 1}$,
		complete orthoschemes $\Orthoscheme{h}$ are Lambert cubes.
\end{itemize}

We next
give the proper inverse images of
these vertices for $\Projection$.
When a vertex is in $\Ball$,
its inverse image for $\Projection$
must be chosen in $\HyperboloidUpper$,
which is uniquely determined.
When a vertex is in the exterior of $\Ball$,
its inverse image is chosen
to be proper inverse image
in the sense of
truncation.
Finally, when a vertex is in $\partial \Ball$,
we choose its
proper inverse image
as any element in the inverse for $\Projection$,
which is a subset in $\LightCone$.
Let $\VertexLift{i}$
be the proper inverse image of $\Vertex{i}$
in this sense.
The coordinates of $\VertexLift{i}$
are
then
as
follows:
\begin{enumerate}
	\item \label{enu: case r<1}
		When $0 < r < 1$,
		we have
		\begin{align*}
			\VertexLift{0}
			&= \frac{1}{\sqrt{1 - r^2}} \left(1, r \sin \theta , r \cos \theta , 0 \right) ,\\
			\VertexLift{1}
			&= \frac{1}{\sqrt{1 - r^2 \cos^2 \theta}} \left(1, 0 , r \cos \theta , 0 \right) ,\\
			\VertexLift{2}
			&= (1,0,0,0) ,\\
			\VertexLift{3}
			&= \frac{1}{\sqrt{ h^2 - 1}} \left(1, 0 , 0 , h \right) .
		\end{align*}
	\item \label{enu: case r=1}
		When $r=1$,
		the
		coordinates of $\VertexLift{1}$, $\VertexLift{2}$ and $\VertexLift{3}$
		are the same as in the first case and
		\begin{equation*}
			\VertexLift{0}
			= \left(1, \sin \theta , \cos \theta , 0 \right) .
		\end{equation*}
	\item \label{enu: case r>1}
		When $r > 1$,
		the
		coordinates of $\VertexLift{1}$, $\VertexLift{2}$ and $\VertexLift{3}$
		are the same as in the first case, and
		\begin{equation*}
			\VertexLift{0}
			=\frac{1}{\sqrt{r^2 - 1}} \left(1, r \sin \theta , r \cos \theta , 0 \right) .
		\end{equation*}
\end{enumerate}
The inverse image
of the pole of a geodesic plane
in $\Ball$
consists of two points in $\HyperboloidOne$.
For each (ordinary) face of
an orthoscheme $\Orthoscheme{h}$,
we choose the inverse image
of the pole
in $\HyperboloidOne$
so that the half-space
defined by this inverse image contains $\Orthoscheme{h}$.
Let $\FaceNormal{i}$
be the inverse image of the pole of
the face $\Face{j}{k}{l}$
for $\SetS{i , j , k , l} = \SetS{0 , 1 , 2 , 3}$
in this sense.
In other words,
$\FaceNormal{i}$ is a point in $\HyperboloidOne$
where
$\HalfSpace{\FaceNormal{i}}$ contains $\Orthoscheme{h}$
and
$\Plane{\FaceNormal{i}}$
contains $\Face{j}{k}{l}$.
For any $r$,
the coordinates of $\FaceNormal{i}$
are as follows:
\begin{align*}
	\FaceNormal{0}
	&= \left(0,-1,0,0 \right) ,\\
	\FaceNormal{1}
	&= \left(0, \cos \theta ,  - \sin \theta , 0 \right) ,\\
	\FaceNormal{2}
	&= \frac{1}{\sqrt{\left( 1 - r^2 \cos^2 \theta \right) h^2 + r^2 \, \cos^2 \theta}} \left( h \, r \cos \theta , 0 , h  , r \cos \theta \right) ,\\
	\FaceNormal{3}
	&= \left(0, 0 , 0 , -1 \right) .
\end{align*}

\subsection{The maximal value of $\VolumeFunction$ and its uniqueness with respect to $h$} \label{subsec: MaxValue}
We focus on the derivative
$d \VolumeFunction (h) / d h$
to prove that
$\VolumeFunction$
attains maximal
on $(1,+ \infty)$,
as well as
its uniqueness.

We first confirm that
the function
$\VolumeFunction$
is piecewise differentiable with respect to $h$ in general.

We first suppose that $r \leq 1$.
By the Schl\"afli differential formula,
the function $\VolumeFunction$
is differentiable with respect to the hyperbolic dihedral angles
$\DihedralAngle{0}{1}$,
$\DihedralAngle{1}{2}$ and
$\DihedralAngle{2}{3}$.
By the expression of the coordinates of $\FaceNormal{i}$ and $\VertexLift{i}$
for $i = 0 , 1 , 2 , 3$
together with Equation~\eqref{eq: DihedralAngle},
these angles are given as smooth functions with respect to $h$.
By the chain rule,
$\VolumeFunction$
is thus differentiable with respect to $h$.
In particular 
$\VolumeFunction$
is continuous on $[0,+ \infty)$.

If $r > 1$,
then there are two combinatorial types
of $\Orthoscheme{h}$;
a double frustum or a Lambert cube.
The function
$\VolumeFunction$
is not only continuous but also
piecewise differentiable on $[0, + \infty)$,
for $\VolumeFunction$
is differentiable
on the intervals corresponding to
each combinatorial types of 
$\Orthoscheme{h}$
by the same argument used for $r \leq 1$.

Recall that
the function
$\VolumeFunction$ 
is continuous
on $[0,+ \infty)$,
strictly increasing on $[0,1]$
and
has its limit $0$ as $h$ approaches $+ \infty$. 
So,
to prove that 
$\VolumeFunction$
attains
maximal
on $(1 , + \infty)$,
it is enough to prove that 
the limit of $d \VolumeFunction (h) / d h$
is positive
as $h$ approaches to $1$ from above.
The uniqueness
of the maximal value of 
$\VolumeFunction$
is induced from the fact
that the solution
of
$d \VolumeFunction (h) / d h = 0$
is at most one
on $(1 , + \infty)$.

Applying the chain rule and
we have
\begin{equation*}
	\frac{d \VolumeFunction (h)}{d h}
	= \frac{\partial \VolumeFunction (h)}{\partial \DihedralAngle{0}{1}} \frac{d \DihedralAngle{0}{1}}{d h}
	+ \frac{\partial \VolumeFunction (h)}{\partial \DihedralAngle{1}{2}} \frac{d \DihedralAngle{1}{2}}{d h}
	+ \frac{\partial \VolumeFunction (h)}{\partial \DihedralAngle{2}{3}} \frac{d \DihedralAngle{2}{3}}{d h} .
\end{equation*}
The parameter
$\DihedralAngle{i}{j}$
defined in Section~\ref{sec: Schlafli}
is the hyperbolic dihedral angle between 
the polar geodesic planes of $\Projection (\FaceNormal{i})$
and 
$\Projection (\FaceNormal{j})$.
In other words,
$\DihedralAngle{i}{j}$ is
the hyperbolic dihedral angle along the edge $\Edge{k}{l}$
for $\SetS{i , j , k , l} = \SetS{0 , 1 , 2 , 3}$.
As is mentioned in the first paragraph of Section~\ref{sec: Schlafli},
if $\Orthoscheme {h}$ is a Lambert cube,
then $\DihedralAngle{1}{2}$
is taken as the hyperbolic dihedral angle
between
the polar geodesic planes of $\Vertex{0}$ and $\Vertex{3}$.

The Schl\"afli differential formula are used to calculate
partial derivatives appeared in the equation above.
By Equation~\eqref{eq: SchlafliFormula} we have
\begin{align*}
	\frac{\partial \VolumeFunction (h)}{\partial \DihedralAngle{1}{2}} &= - \frac{1}{2} \, \EdgeLength{0}{3} ,&
	\frac{\partial \VolumeFunction (h)}{\partial \DihedralAngle{2}{3}} &= - \frac{1}{2} \, \EdgeLength{0}{1} ,
\end{align*}
where $\EdgeLength{i}{j}$
is the hyperbolic length with respect to
the edge $\Edge{i}{j}$
defined in Section~\ref{sec: Schlafli}.
Furthermore,
the hyperbolic dihedral angle $\DihedralAngle{0}{1}$,
which coincides with the Euclidean angle $\theta$
by the definition of $\Orthoscheme{h}$,
is constant with respect to $h$,
meaning that $ d \DihedralAngle{0}{1} / d h = 0$.
We thus have
\begin{equation} \label{eq: dvdh}
	\frac{d \VolumeFunction (h)}{d h}
	=
	- \frac{1}{2}
	\left( \EdgeLength{0}{3} \frac{d \DihedralAngle{1}{2}}{d h} + \EdgeLength{0}{1} \frac{d \DihedralAngle{2}{3}}{d h} \right) .
\end{equation}

We divide the remaining argument
into three cases according to the value of $r$.

\subsubsection*{Case~\eqref{enu: case r<1}: single frustums with ordinary vertex $\Vertex{0}$, i.e., $0 < r < 1$} 
By Equations~\eqref{eq: DistancePointPlane}
we have
\begin{align}
	\EdgeLength{0}{3} \nonumber
	&= \Arcsinh \left( - \LIP{\VertexLift{0}}{\VertexLift{3}} \right)  \nonumber\\
	&= \Arcsinh \frac{1}{\sqrt{1-r^2} \, \sqrt{h^2-1}}  \nonumber\\
	&= \log \left( \frac{1}{\sqrt{1-r^2} \, \sqrt{h^2-1}}
		+ \sqrt{ \left( \frac{1}{\sqrt{1-r^2} \, \sqrt{h^2-1}} \right)^2 + 1} \right)  \nonumber\\
	&= \log \frac{\sqrt{\left(1 - r^2 \right) h^2 + r^2} + 1}{\sqrt{1-r^2} \, \sqrt{h^2-1}} , \label{eq: EdgeLength03}
\end{align}
and by Equation~\eqref{eq: DihedralAngle}
we have
\begin{align}
	\DihedralAngle{1}{2} \nonumber
	&= \arccos \left( - \LIP{\FaceNormal{1}}{\FaceNormal{2}} \right) \nonumber\\
	&= \arccos \frac{h \sin \theta}{\sqrt{ \left(1 - r^2 \cos^2 \theta \right) h^2 + r^2 \cos^2 \theta}} , \nonumber\\
	\DihedralAngle{2}{3} \nonumber
	&= \arccos \left( - \LIP{\FaceNormal{2}}{\FaceNormal{3}} \right) \nonumber\\
	&= \arccos \frac{r \cos \theta}{\sqrt{ \left( 1 - r^2 \cos^2 \theta \right) h^2 + r^2 \cos^2 \theta }} . \nonumber
\end{align}
Derivatives of hyperbolic dihedral angles with respect to $h$ are obtained as follows:
\begin{align}
	\label{eq: DerT12}
	\frac{d \DihedralAngle{1}{2}}{d h}
	&= \frac{ - r^2 \sin \theta \cos \theta}{\left\{ \left( 1 - r^2 \cos^2 \theta \right) h^2
		+ r^2 \cos^2 \theta \right\} \sqrt{\left( 1 - r^2 \right) h^2 + r^2}} ,\\
	\label{eq: DerT23}
	\frac{d \DihedralAngle{2}{3}}{d h}
	&= \frac{ r \sqrt{1 - r^2 \cos^2 \theta} \, \cos \theta }{\left( 1 - r^2 \cos^2 \theta \right) h^2 + r^2 \cos^2 \theta} .
\end{align}
Substitute Equations~\eqref{eq: EdgeLength03}, \eqref{eq: DerT12} and \eqref{eq: DerT23}
to Equation~\eqref{eq: dvdh}
and we have
\begin{equation*}
	\frac{d \VolumeFunction (h)}{d h}
	=
	\frac{1}{2} \left( - \frac{d \DihedralAngle{1}{2}}{d h} \right) \left( F(h) - \frac{1}{2} \, \log (1 - r^2) \right) ,
\end{equation*}
where
\begin{equation} \label{eq: DefOfF}
	F(h)
	\vcentcolon =
	\log \frac{\sqrt{\left(1 - r^2 \right) h^2 + r^2} + 1}{ \sqrt{h^2-1} }
	- C \, \sqrt{ \left(1 - r^2 \right) h^2 + r^2 }
\end{equation}
and $C := \EdgeLength{0}{1} \, \sqrt{1 - r^2 \cos^2 \theta} / \! \left( r \sin \theta \right)$.

Since
\begin{align*}
	\lim_{h \downarrow 1}
	\left( - \frac{d \DihedralAngle{1}{2}}{d h} \right)
	&= r^2 \sin \theta \cos \theta ,&
	\lim_{h \downarrow 1}
	F (h) 
	&= + \infty ,
\end{align*}
we have
\begin{align*}
	\lim_{h \downarrow 1} \frac{d \VolumeFunction (h)}{d h}
	&=
	\frac{1}{2}
	\left( r^2 \sin \theta \cos \theta \right)
	\left( + \infty - \frac{1}{2} \, \log (1 - r^2 ) \right)\\
	&= + \infty ,
\end{align*}
which implies that
$\VolumeFunction$
attains maximal
for some
$h \in (1 , + \infty)$.

This result
together with
$\lim_{h \uparrow + \infty} \VolumeFunction (h) = 0$
implies
that
the uniqueness
of the maximal value of
the function $\VolumeFunction$
with respect to $h$
is proved
by showing
that
the equation
$d \VolumeFunction (h) / d h = 0$
has at most one solution on
$(1 , + \infty)$.
Since
$d \DihedralAngle{1}{2} / d h \ne 0$
on
$(1 , + \infty)$
by Equation~\eqref{eq: DerT12},
we have
\begin{multline} \label{eq: dfmost1}
	\Set{ h \in ( 1 , + \infty ) | \frac{d \VolumeFunction (h)}{d h} = 0 } \\
	=
	\Set{ h \in ( 1 , + \infty ) | F(h) - \frac{1}{2} \, \log (1 - r^2) = 0 } .
\end{multline}
Since
\begin{equation} \label{eq: dfdh}
	\frac{d}{d h} \left( F (h)  - \frac{1}{2} \, \log (1 - r^2) \right)
	= - \frac{ h }{\left( h^2-1 \right) \sqrt{ \left(1 - r^2 \right) h^2 + r^2}} \, G(h),
\end{equation}
where
$G(h) \vcentcolon = C \left( 1-r^2 \right) \left( h^2-1 \right) + 1$,
is negative on $(1,+\infty)$,
the function
$F(h) - \left( 1/2 \right) \log (1 - r^2)$
is strictly monotonic with respect to $h$.
This implies that
the number of elements
in the right-hand side set of Equation~\eqref{eq: dfmost1}
is at most one,
so is the left-hand side.

\subsubsection*{Case~\eqref{enu: case r=1}: single frustums with ideal vertex $\Vertex{0}$, i.e., $r = 1$}
Using Equation~\eqref{eq: DistancePointHorosphere},
we have
\begin{align*}
	\EdgeLength{0}{3}
	&= \log \left( - 2 \LIP{\VertexLift{0}}{\VertexLift{3}} \right) \\
	&= \log \frac{2}{\sqrt{h^2 - 1}} .
\end{align*}
By Equations~\eqref{eq: DerT12} and \eqref{eq: DerT23} with $r=1$
and we have
\begin{equation*}
	- \frac{d \DihedralAngle{1}{2}}{d h}
	= \frac{d \DihedralAngle{2}{3}}{d h}
	= \frac{ \sin \theta \cos \theta}{h^2 \sin^2 \theta + \cos^2 \theta} .
\end{equation*}
Substitute
these equations
to Equation~\eqref{eq: dvdh} and we have
\begin{align*}
	\frac{d \VolumeFunction (h)}{d h}
	&=
	\frac{1}{2} \left( - \frac{d \DihedralAngle{1}{2}}{d h} \right)
	\left( \log \frac{2}{\sqrt{h^2-1}} - \EdgeLength{0}{1} \right) \\
	&=
	\frac{1}{2} \left( - \frac{d \DihedralAngle{1}{2}}{d h} \right)
	\left( - \frac{1}{2} \log (h^2 - 1 ) + \log 2 - \EdgeLength{0}{1} \right)	.
\end{align*}

Since
\begin{align*}
	\lim_{h \downarrow 1}
	\left( - \frac{d \DihedralAngle{1}{2}}{d h} \right)
	&= \sin \theta \cos \theta ,&
	\lim_{h \downarrow 1}
	\log ( h^2 - 1 )
	&= - \infty ,
\end{align*}
we have
$\lim_{h \downarrow 1} d \VolumeFunction (h) / d h = + \infty$ in this case.

The uniqueness of the maximal value of $\VolumeFunction$
with respect to $h$
is obtained by
the facts that $\log ( h^2 - 1 )$
is a strictly monotonic function
and that 
 $d \DihedralAngle{1}{2} / d h \ne 0$
on $(1,+\infty)$.

\subsubsection*{Case~\eqref{enu: case r>1}: double frustums or Lambert cubes, i.e., $r > 1$}
Since our strategy
of proving that
$\VolumeFunction$
attains maximal
on $(1 , + \infty)$
is to prove that
the limit of $d \VolumeFunction (h) / d h$
is positive
as $h$ approaches to $1$ from above,
it is enough to consider the case
that $h$ is close enough to $1$,
meaning that 
$\Orthoscheme{h}$ are double frustum, not Lambert cubes.

Under this assumption,
use Equation~\eqref{eq: DistancePlanes} and we have
\begin{align*}
	\EdgeLength{0}{3}
	&= \Arccosh \left( - \LIP{\VertexLift{0}}{\VertexLift{3}} \right) \\
	&= \Arccosh \frac{1}{\sqrt{r^2 - 1} \, \sqrt{h^2 - 1}} \\
	&= \log \left( \frac{1}{\sqrt{r^2 - 1} \, \sqrt{h^2 - 1}}
		+ \sqrt{ \left( \frac{1}{\sqrt{r^2 - 1} \, \sqrt{h^2 - 1}} \right)^2 - 1} \right) \\
	&= \log \frac{\sqrt{\left(1 - r^2 \right) h^2 + r^2} + 1}{\sqrt{r^2 - 1} \, \sqrt{h^2 - 1}} .
\end{align*}
Substitute this equation
together with Equations~\eqref{eq: DerT12} and \eqref{eq: DerT23}
to Equation~\eqref{eq: dvdh} and we have
\begin{equation} \label{eq: dvdhr>1}
	\frac{d \VolumeFunction (h)}{d h}
	=
	\frac{1}{2} \left( - \frac{d \DihedralAngle{1}{2}}{d h} \right)
	\left( F(h) - \frac{1}{2} \, \log (r^2 - 1) \right) ,
\end{equation}
where $F$ is the function defined in Case~\eqref{enu: case r<1}.

By the same reason explained in Case~\eqref{enu: case r<1},
we have 
$\lim_{h \downarrow 1} d \VolumeFunction (h) / d h = + \infty$ in this case as well.

\medskip

We next prove that
$\VolumeFunction$
does not attain maximal
when $\Orthoscheme{h}$
is a Lambert cube,
i.e.,
$h \in (r / \sqrt{r^2 - 1} , + \infty)$.
What we actually prove
is that
$\VolumeFunction$
is strictly decreasing,
using Equation~\eqref{eq: dvdh}.
Recall
that
$\EdgeLength{0}{3}$ is
the hyperbolic distance
between the polar geodesic plane of $\Projection (\FaceNormal{1})$
and $\Projection (\FaceNormal{2})$,
and
$\DihedralAngle{1}{2}$
is the hyperbolic dihedral angle
between the polar geodesic planes of 
$\Vertex{0}$ and $\Vertex{3}$,
while $\EdgeLength{0}{1}$ and $\DihedralAngle{2}{3}$ are the same as in other cases.
Using Equations~\eqref{eq: DistancePlanes} and \eqref{eq: DihedralAngle},
we have
\begin{align*}
	\EdgeLength{0}{3}
	&= \Arccosh \left( - \LIP{\FaceNormal{1}}{\FaceNormal{2}} \right) \\
	&= \Arccosh \frac{h \sin \theta}{\sqrt{ \left( 1 - r^2 \cos^2 \theta \right) h^2 + r^2 \cos^2 \theta }} \\
	&= \log \frac{ h \sin \theta + \sqrt{ \left( r^2 - 1 \right) h^2 - r^2 } \, \cos \theta }{\sqrt{ \left( 1 - r^2 \cos^2 \theta \right) h^2 + r^2 \cos^2 \theta }} ,\\
	\DihedralAngle{1}{2}
	&= \arccos \left( - \LIP{\VertexLift{0}}{\VertexLift{3}} \right) \\
	&= \arccos \frac{1}{\sqrt{r^2 - 1} \, \sqrt{h^2 - 1}} ,\\
	\frac{d \DihedralAngle{1}{2}}{d h}
	&= \frac{h}{\left( h^2 - 1 \right) \sqrt{ \left( r^2 - 1 \right) h^2 - r^2 }} .
\end{align*}
The value 
$d \DihedralAngle{1}{2} / d h$
is positive
on $(r / \sqrt{r^2 - 1} , + \infty)$
by this expression,
so is $d \DihedralAngle{2}{3} / d h$ by Equation~\eqref{eq: DerT23}.
The value $\EdgeLength{0}{1}$ is positive, for it is the hyperbolic length of an edge.
By substituting these results to Equation~\eqref{eq: dvdh},
if we can prove that
$\EdgeLength{0}{3} > 0$,
then we have $d \VolumeFunction / d h < 0$,
namely 
$\VolumeFunction$
is strictly decreasing,
on $(r / \sqrt{r^2 - 1} , + \infty)$.

The inequality 
$\EdgeLength{0}{3} > 0$
is equivalent to
\begin{equation*}
	\frac{ h \sin \theta +
		\sqrt{ \left( r^2 - 1 \right) h^2 - r^2 } \, \cos \theta }{\sqrt{ \left( 1 - r^2 \cos^2 \theta \right) h^2 + r^2 \cos^2 \theta }}
	> 1 .
\end{equation*}
Calculating
\begin{equation*}
	\left( \frac{ h \sin \theta
		+ \sqrt{ \left( r^2 - 1 \right) h^2 - r^2 } \, \cos \theta }{\sqrt{ \left( 1 - r^2 \cos^2 \theta \right) h^2
			+ r^2 \cos^2 \theta }} \right)^2 - 1
\end{equation*}
and 
we have an inequality
\begin{equation*}
	\sqrt{\left( r^2 - 1 \right) h^2 - r^2} \, h \sin \theta
	> - \left\{ \left( r^2 - 1 \right) h^2 - r^2 \right\} \cos \theta ,
\end{equation*}
which is equivalent to the previous one.
This inequality holds on $(r / \sqrt{r^2 - 1} , + \infty)$,
for the right-hand side is negative
while the left hand side is positive.
We have thus proved
that
$\VolumeFunction$
does not attain maximal when $\Orthoscheme{h}$ is a Lambert cube.

\medskip

Since $\VolumeFunction$
does not attain maximal
when $\Orthoscheme{h}$
is a Lambert cube,
for the proof of
the
uniqueness of the maximal value
of $\VolumeFunction$,
we can assume
that
$h \in (1 , r / \sqrt{r^2 - 1} ]$.
Under this assumption
together with
the fact that
$d \DihedralAngle{1}{2} / d h \ne 0$
on
$(1 , r / \sqrt{r^2 - 1})$
by Equation~\eqref{eq: DerT12},
what we need to prove is that
the number of elements in the set
\begin{multline*}
	\Set{ h \in ( 1 , \frac{r}{\sqrt{r^2 - 1}} ) | \frac{d \VolumeFunction (h)}{d h} = 0 } \\
	=
	\Set{ h \in ( 1 , \frac{r}{\sqrt{r^2 - 1}} ) | F(h) - \frac{1}{2} \, \log (r^2 - 1) = 0 }
\end{multline*}
is at most one,
where
$d \VolumeFunction (h) / d h$
is calculated in Equation~\eqref{eq: dvdhr>1}
and
the function $F$ is given in
Equation~\eqref{eq: DefOfF}.

By Equation~\eqref{eq: dfdh},
we have
\begin{equation*}
	\frac{d}{d h} \left( F (h)  - \frac{1}{2} \, \log ( r^2 - 1) \right)
	= - \frac{ h }{\left( h^2 - 1 \right) \sqrt{  \left(1 - r^2 \right) h^2 + r^2 }} \, G(h) ,
\end{equation*}
where we recall that $G(h) = C \left( 1 - r^2 \right) \left( h^2 - 1 \right) + 1 $.
Unlike Case~\eqref{enu: case r<1},
the sign of 
the function $G$
is not expected to be constant on $(1 , r / \sqrt{r^2 - 1})$,
for $1 - r^2 < 0$.

Since
\begin{equation*}
	\frac{ h }{\left( h^2 - 1 \right) \sqrt{  \left(1 - r^2 \right) h^2 + r^2 }} \ne 0
\end{equation*}
on $(1 , r / \sqrt{r^2 - 1})$,
we have
\begin{equation*}
	\Set{ h \in ( 1 , \frac{r}{\sqrt{r^2 - 1}} ) | F'(h) = 0 }
	=
	\Set{ h \in ( 1 , \frac{r}{\sqrt{r^2 - 1}} ) | G(h) = 0 } .
\end{equation*}
The function $G$ is quadratic with respect to $h$,
the coefficient of $h^2$ is negative and $G(1) > 0$.
These imply that
the number of elements in
the set of the right-hand side of the equation above
is at most one,
so is the set of the left-hand side of the equation.

Suppose that the number of elements
in the set 
\begin{equation*}
	\Set{ h \in ( 1 , \frac{r}{\sqrt{r^2 - 1}} ) | F(h) - \frac{1}{2} \, \log (r^2 - 1) = 0 }
\end{equation*}
is more than $1$.
By the mean-value theorem
together with the fact that
the limit
of
$F(h) - \left( 1/2 \right) \log (r^2 - 1)$
is
$0$
as $h$ approaches $r / \sqrt{r^2 - 1}$ from below,
the set
$\Set{ h \in ( 1 , r / \sqrt{r^2 - 1} ) | F'(h) = 0 }$
must contain at least two elements,
which contradicts the result obtained above.

\bigskip

We have thus proved
Theorem~\ref{thm: main}.
\hfill $\Box$

\appendix
\section{The maximal area of two-dimensional hyperbolic complete orthoschemes}

By the definition of orthoscheme,
a triangle $\Face{0}{1}{2}$ in the two-dimensional hyperbolic space
is orthoscheme
if
the edge $\Edge{0}{1}$ is perpendicular to the edge $\Edge{1}{2}$,
namely $\Face{0}{1}{2}$ is a right-angled triangle with the right angle at $\Vertex{1}$.
Without loss of generality,
we suppose that $\Face{0}{1}{2}$ lies in the projective disc model $\Disc$
with the coordinates
\begin{align*}
	\Vertex{0} &= (r,0) ,&
	\Vertex{1} &= (0,0) ,&
	\Vertex{2} &= (0,h) .
\end{align*}
For a given $r > 0$,
we consider a family $\SetS{ \RAT{h} }_{h>0}$
of complete orthoschemes,
where $\RAT{h}$
is a complete orthoscheme with vertices $\Vertex{0}$, $\Vertex{1}$ and $\Vertex{2}$.
What we discuss
is the maximal area
for
this family.
\begin{thm} \label{thm: 2ortho}
	The maximal area for
	$\SetS{\RAT{h}}_{h>0}$
	is obtained as follows:
	\begin{enumerate}
		\item
			For any $r < 1$,
			the area of $\RAT{h}$
			attains maximal
			just for $h = 1$.
			The maximal area is
			$\pi / 2 - \AngleAtZero{1}$,
			where $\AngleAtZero{1}$ is
			the hyperbolic angle at $P_0$ of $\RAT{1}$.
		\item
			The area of $\RATT{1}{h}$
			attains maximal
			for any $h \in [1 + \infty )$.
			The maximal area is
			$\pi / 2$.
		\item
			For any $r > 1$,
			the area of $\RAT{h}$
			attains maximal
			for any $h \in [1 , r / \sqrt{r^2 - 1}]$.
			The maximal area is $\pi / 2$.
	\end{enumerate}
\end{thm}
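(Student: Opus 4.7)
The plan is to reduce the area of each $\RAT{h}$ to a sum of interior angles via the hyperbolic Gauss-Bonnet formula, and then read off all three conclusions from the combinatorial type. The key observation, which comes directly from the pole-polar perpendicularity recalled in Section~\ref{sec: preliminaries}, is that whenever a vertex of the original triangle is ultraideal, its polar line meets every edge emanating from that vertex at a right angle; combined with the orthoscheme condition that forces the angle at $\Vertex{1}$ to equal $\pi / 2$, the only interior angles of $\RAT{h}$ that can fail to be $\pi / 2$ are (i) the angle $\AngleAtZero{h}$ at $\Vertex{0}$ when $\Vertex{0}$ is an ordinary point of $\Disc$, (ii) the analogous angle $b(h)$ at $\Vertex{2}$ when $\Vertex{2}$ is ordinary, and (iii) the angle between the polar lines of $\Vertex{0}$ and $\Vertex{2}$ when both are ultraideal and those polar lines meet inside $\Disc$.

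Next I would enumerate the combinatorial types in a two-dimensional analogue of Section~5. The polar lines of two ultraideal vertices $\Vertex{0}$ and $\Vertex{2}$ meet inside $\Disc$ precisely when $(1/r)^2 + (1/h)^2 < 1$, equivalently $r > 1$ and $h > r / \sqrt{r^2 - 1}$. Thus for $r < 1$ the orthoscheme is a triangle on $(0, 1]$ and a singly truncated quadrilateral on $(1, +\infty)$; for $r = 1$ it is a triangle with $\Vertex{0}$ ideal on $(0, 1]$ and a truncated quadrilateral with $\Vertex{0}$ ideal on $(1, +\infty)$; for $r > 1$ it is a truncated quadrilateral on $(0, 1]$, a doubly truncated pentagon still carrying the edge $\Edge{0}{2}$ on $(1, r / \sqrt{r^2 - 1})$, and a Lambert quadrilateral on $(r / \sqrt{r^2 - 1}, +\infty)$. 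Applying $\text{area} = (n - 2) \pi - \sum \alpha_i$ together with the right-angle observation above, the area in every case collapses to $\AreaFunction{h} = \pi / 2 - (\text{sum of the non-right angles listed in (i)--(iii)})$, so $\AreaFunction{h} \leq \pi / 2$ in general, with equality exactly when every interior angle is a right angle.

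The three conclusions then follow by tracking when those non-right contributions vanish. For $r < 1$ the angle $\AngleAtZero{h}$ persists throughout, so the maximum is attained only at $h = 1$, where $b(1) = 0$ and the area equals $\pi / 2 - \AngleAtZero{1}$; for $h > 1$ one must further verify $\AngleAtZero{h} > \AngleAtZero{1}$, which I would obtain from an explicit formula of the form $\cos \AngleAtZero{h} = r / \sqrt{(1 - r^2) h^2 + r^2}$ derived from the Lorentzian inner product of outward face-normal vectors exactly as in Section~5.2, since the right-hand side is strictly decreasing in $h$ for $r < 1$. For $r = 1$ the ideal vertex $\Vertex{0}$ contributes $0$ for every $h$, so on $[1, +\infty)$ every angle is right or $0$ and $\AreaFunction{h} = \pi / 2$. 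For $r > 1$ and $h \in [1, r / \sqrt{r^2 - 1}]$ every interior angle of the pentagon (or of its degenerate quadrilateral forms at the endpoints) is right by truncation, again giving $\AreaFunction{h} = \pi / 2$; outside this interval a strictly positive $b(h)$ or a strictly positive interpolar angle reappears, strictly lowering the area. The only genuine obstacle is organizational: cataloguing the two-dimensional combinatorial types correctly and, for the tail $h > 1$ in Case~(1), confirming the strict monotonicity of $\AngleAtZero{h}$; everything else is immediate from Gauss-Bonnet.
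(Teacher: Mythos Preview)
Your approach is essentially the paper's: both apply the Gauss--Bonnet formula $A = (n-2)\pi - \sum \alpha_i$, catalogue the combinatorial types of $\RAT{h}$, and use that truncation contributes only right angles, so $\AreaFunction{h} = \pi/2$ minus whatever non-right angles survive. The one methodological difference is in Case~(1) for $h > 1$: the paper argues that $\AngleAtZero{h}$ increases because the corner at $\Vertex{0}$ grows as a set, while you compute $\cos \AngleAtZero{h} = r / \sqrt{(1-r^2)h^2 + r^2}$ via Lorentzian inner products. Both are valid; yours is more explicit, the paper's is more geometric.

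There is, however, a small gap in your Case~(1) argument on the interval $(0,1)$. There the polygon is an ordinary triangle, so $\AreaFunction{h} = \pi/2 - \AngleAtZero{h} - b(h)$ with $b(h) > 0$ the angle at $\Vertex{2}$. You assert the maximum occurs only at $h=1$ ``where $b(1)=0$'', but your own formula shows $\AngleAtZero{h}$ is strictly \emph{increasing} in $h$, so $\AngleAtZero{h} < \AngleAtZero{1}$ on $(0,1)$, and the inequality $\AngleAtZero{h} + b(h) > \AngleAtZero{1}$ is not automatic from what you wrote. The paper closes this by the one-line observation that $\RAT{h}$ strictly increases as a set as $h \uparrow 1$, hence so does $\AreaFunction{h}$; you should either invoke that, or compute $b(h)$ by the symmetric formula $\cos b(h) = h / \sqrt{(1-h^2)r^2 + h^2}$ and check directly.
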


\begin{proof}
We start by recalling a formula to calculate
the area $A$ of a hyperbolic convex $n$-gon with hyperbolic angles $\alpha_1 , \alpha_2 , \dotsc , \alpha_n$;
\begin{equation*}
	A = \left( n - 2 \right) \pi - \left( \alpha_1 + \alpha_2 + \dotsb + \alpha_n \right) .
\end{equation*}
See Theorem~3.5.5 of \cite{ra} for the proof when $n=3$.

Let
$\AreaFunction{h}$
be the area of $\RAT{h}$.
For any $r > 0$,
a complete orthoscheme
$\RAT{h}$ increases as a set
when $h$ approaches $1$ from below,
which implies that
$\AreaFunction{h}$
also increases.
So,
to prove the theorem,
it is enough to assume that $h \geq 1$.
Using this formula,
we obtain the area of $\RAT{h}$ for each
case.
\begin{enumerate}
\item
	Suppose $r<1$.
	Let $\AngleAtZero{h}$ be the hyperbolic angle
	at $\Vertex{0}$ of $\RAT{h}$.
	
	When $h = 1$,
	$\RAT{1}$ is a triangle
	with ideal vertex $\Vertex{2}$.
	Since the hyperbolic angle at $\Vertex{2}$ is $0$,
	the area is
	\begin{align*}
		\AreaFunction{1}
		&= \pi - \left( \AngleAtZero{1} + \frac{\pi}{2} + 0 \right) \\
		&= \frac{\pi}{2} - \AngleAtZero{1} .
	\end{align*}
	
	When $h > 1$,
	$\RAT{h}$ is a quadrilateral.
	The hyperbolic angles
	at the vertices constructed by truncation with respect to $\Vertex{2}$
	are right angles.
	The area is
	\begin{align*}
		\AreaFunction{h}
		&= 2 \, \pi - \left( \AngleAtZero{h} + \frac{\pi}{2} + \left( \frac{\pi}{2} + \frac{\pi}{2} \right) \right)\\
		&= \frac{\pi}{2} - \AngleAtZero{h} .
	\end{align*}
	When $h$
	approaches
	$+ \infty$,
	the corner at $\Vertex{0}$ increases as a set,
	so is the angle $\AngleAtZero{h}$.
	This implies that
	$\AreaFunction{h}$ is a strictly decrease
	function on $[1 , + \infty)$.
	
	As a result,
	$\AreaFunction{h}$ attains
	maximal
	if and only if
	$h=1$
	in this case.

\item
	Suppose $r=1$.
	The hyperbolic angle at $\Vertex{0}$ is $0$ in this case.
	Use the argument in (1) with $\AngleAtZero{h} = 0$
	for any $h \geq 1$
	and we have the desired conclusion.
	
\item
	Suppose $r > 1$.
	
	When $h=1$,
	$\RAT{1}$
	is a quadrilateral
	with angle $0$ at $\Vertex{2}$
	and three right angles.
	The area is
	\begin{align*}
		\AreaFunction{1}
		&= 2 \, \pi - \left( \left( \frac{\pi}{2} + \frac{\pi}{2} \right) + \frac{\pi}{2} + 0 \right)\\
		&= \frac{\pi}{2} .
	\end{align*}

	When $h>1$,
	there are two kinds of $\RAT{h}$,
	which correspond to
	double frustums and Lambert cubes of
	three-dimensional complete orthoschemes.
	\begin{itemize}
	\item
		If $h < r / \sqrt{r^2 - 1}$,
		then $\RAT{h}$ is a right-angled pentagon.
		The area is
		\begin{align*}
			\AreaFunction{h}
			&= 3 \, \pi - \frac{\pi}{2} \times 5\\
			&= \frac{\pi}{2} .
		\end{align*}
	\item
		If $h \geq r / \sqrt{r^2 - 1}$,
		then $\RAT{h}$ is a quadrilateral,
		whose edges consists of
		$\Edge{0}{1}$,
		$\Edge{1}{2}$ and
		polar lines of  $\Vertex{0}$ and $\Vertex{2}$.
		Let $b$ be
		the hyperbolic angle between
		these polar lines.
		Then the area is
		\begin{align*}
			\AreaFunction{h}
			&= 2 \, \pi - \left( \frac{\pi}{2} \times 3 + b \right)\\
			&= \frac{\pi}{2} - b .
		\end{align*}
		The maximal area
		arrises when $b=0$,
		which occurs if and only if the polar planes of
		$\Vertex{0}$ and $\Vertex{2}$ are parallel,
		namely $h = r / \sqrt{r^2 - 1}$.
	\end{itemize}
\end{enumerate}

Summarizing these results,
we have completed the proof.
\end{proof}

\section*{Acknowledgements}
The authors would like to thank
the referee for his/her careful reading and useful suggestions.

\end{document}